\newtheorem*{theorem*}{\hspace{-6.3mm}\textbf{Theorem}}  
\newtheorem{theoremcounter}{Theorem Counter}[section]
\theoremstyle{remark}
\theoremstyle{definition}
\theoremstyle{plain}
\newtheorem{corollary}[theoremcounter]{Corollary}
\newtheorem{theorem}[theoremcounter]{Theorem}
\numberwithin{equation}{section}
\newcommand{\Z}{\mathbb{Z}}
\newcommand{\Q}{\mathbb{Q}}
\newcommand{\C}{\mathbb{C}}
\newcommand{\dd}{\mathrm{d}}
\newcommand{\scB}{\mathscr{B}}
\begin{document}

\title[]{Applications of Fa\`{a} di Bruno's formula to partition traces}

\author[]{Toshiki Matsusaka}
\address{Faculty of Mathematics, Kyushu University, Motooka 744, Nishi-ku, Fukuoka 819-0395, Japan}
\email{matsusaka@math.kyushu-u.ac.jp}

\subjclass[2020]{05A15, 05A17}



\maketitle

\begin{abstract}
	We revisit several partition-theoretic generating functions, including the theta quotients from Ramanujan's lost notebook, MacMahon's partition functions, and reciprocal sums of parts in partitions, through the lens of the classical Fa\`{a} di Bruno formula.
	This approach offers a unified and natural reinterpretation of known results and provides a systematic framework for deriving new identities of a similar type.
\end{abstract}


\section{Introduction}\label{sec:Intro}

A \emph{partition} of a positive integer $k$ is a nonincreasing sequence of positive integers
\[
	\lambda = (\lambda_1, \lambda_2, \dots, \lambda_r)
\]
such that the sum of the parts is $k$. We write this as $\lambda \vdash k$. Equivalently, a partition can be expressed in frequency notation as $\lambda = (1^{m_1}, \dots, k^{m_k}) \vdash k$, where each $m_j \ge 0$ denotes the multiplicity of the part $j$. The \emph{length} of $\lambda$ is $\ell(\lambda) \coloneqq m_1 + \cdots + m_k$. For each $\lambda = (1^{m_1}, \dots, k^{m_k}) \vdash k$, we associate the monomial
\[
	X_\lambda \coloneqq \prod_{j=1}^k X_j^{m_j}.
\]
To introduce the notion of partition traces, let $\phi: \mathcal{P} \to \C$ be a map on the set $\mathcal{P}$ of all partitions. For each positive integer $k$, the \emph{partition trace} of $\phi$ is defined by
\[
	\mathrm{Tr}_k(\phi; X_1, \dots, X_k) \coloneqq \sum_{\lambda \vdash k} \phi(\lambda) X_\lambda.
\]
For instance, when $\phi$ is the constant map $\phi(\lambda) = 1$, the trace $\mathrm{Tr}_4(1; X_1, \dots, X_4)$ is given by
\[
	\mathrm{Tr}_4(1; X_1, X_2, X_3, X_4) = X_4 + X_3 X_1 + X_2^2 + X_2 X_1^2 + X_1^4.
\]

A particularly interesting case arises when each $X_j$ is specialized to the Eisenstein series $E_{2j}(q)$. The resulting series, called the \emph{traces of partition Eisenstein series}, have been studied in several recent papers. Here, the \emph{Eisenstein series} $E_{2k}(q)$ is defined by
\[
	E_{2k}(q) \coloneqq 1 - \frac{4k}{B_{2k}} \sum_{n=1}^\infty \sigma_{2k-1}(n) q^n,
\]
where $B_k$ is the $k$-th Bernoulli number (with $B_1 = 1/2$, to be used later), and $\sigma_{2k-1}(n) \coloneqq \sum_{d \mid n} d^{2k-1}$ is the usual divisor sum. 
As a representative example, Amdeberhan--Ono--Singh~\cite{AmdeberhanOnoSingh2024-pre} studied the following quotient recorded in Ramanujan's Lost Notebook~\cite[p.~369]{Ramanujan1988}:
\[
	V_{2k}(q) \coloneqq \frac{\sum_{n \in \Z} (-1)^n (6n+1)^{2k} q^{\frac{n(3n+1)}{2}}}{\sum_{n \in \Z} (-1)^n q^{\frac{n(3n+1)}{2}}},
\]
and obtained the following explicit formula that refines earlier results of Berndt--Yee~\cite{BerndtYee2003}, (see also~\cite[Chapter 14]{LostNoteII}).

\begin{theorem*}[Amdeberhan--Ono--Singh]
	For partitions $\lambda = (1^{m_1}, \dots, k^{m_k}) \vdash k$, we define
	\[
		\phi_V(\lambda) \coloneqq 4^k (2k)! \prod_{j=1}^k \frac{1}{m_j!} \left(\frac{(4^j - 1) B_{2j}}{(2j) (2j)!}\right)^{m_j}.
	\]
	Then we have
	\[
		V_{2k}(q) = \mathrm{Tr}_k(\phi_V; E_2(q), E_4(q), \dots, E_{2k}(q)).
	\]
\end{theorem*}

Historically, partition traces have appeared in a variety of contents, not limited to the case where each $X_j$ is evaluated at the Eisenstein series. As another example, let us revisit Lehmer's 1966 work~\cite{Lehmer1966} on cyclotomic polynomials. Let $\Phi_n(x)$ be the $n$-th cyclotomic polynomial. In this work, Lehmer showed that the higher derivatives of $\Phi_n(x)$ at $x=1$ can be expressed in terms of partition traces as follows. 

\begin{theorem*}[Lehmer]
	For partitions $\lambda = (1^{m_1}, \dots, k^{m_k}) \vdash k$, we define
	\[
		\phi_\Phi(\lambda) = k! \prod_{j=1}^k \frac{(-1)^{m_j}}{m_j! j^{m_j}}.
	\]
	Then, for $n \ge 2$, we have
	\[
		\frac{\Phi_n^{(k)}(1)}{\Phi_n(1)} = \mathrm{Tr}_k(\phi_\Phi; \varsigma_1(n), \varsigma_2(n), \dots, \varsigma_k(n)),
	\]
	where we define
	\[
		\varsigma_k(n) = -\frac{1}{(k-1)!} \sum_{m=1}^k \frac{B_m}{m} s(k,m) J_m(n),
	\]
	with $s(k, m)$ denoting the Stirling number of the first kind and $J_k(n)$ the Jordan totient function.
\end{theorem*}

The striking similarity between these two results is noteworthy and suggests the presence of a common and more fundamental underlying structure. The purpose of this article is to show that a wide range of results can be understood in a unified and transparent way through the classical \emph{Fa\`{a} di Bruno formula}. 

Roughly speaking, Fa\`{a} di Bruno's formula provides an explicit expression for the derivative of a composite function. We refer the reader to \cite{Johnson2002, Craik2005} for the historical background and detailed proofs, as well as the recent article~\cite{Sullivan2022}. To be precise, we introduce the (complete) \emph{Bell polynomial} $\scB_k(X_1, \dots, X_k) \in \Z[X_1, \dots, X_k]$, defined by the generating function
\begin{align}\label{def:Bell-polynomial}
	\sum_{k=0}^\infty \scB_k(X_1, \dots, X_k) \frac{t^k}{k!} = \prod_{j=1}^\infty \exp \left(X_j \frac{t^j}{j!}\right).
\end{align}
The first few examples are
\begin{align*}
	\scB_1(X_1) &= X_1,\\
	\scB_2(X_1, X_2) &= X_2 + X_1^2,\\
	\scB_3(X_1, X_2, X_3) &= X_3 + 3X_2 X_1 + X_1^3,\\
	\scB_4(X_1, X_2, X_3, X_4) &= X_4 + 4X_3 X_1 + 3X_2^2 + 6X_2 X_1^2 + X_1^4.
\end{align*}
It is easy to see that Bell polynomials can be expressed via partition traces:
\[
	\scB_k(X_1, \dots, X_k) = \mathrm{Tr}_k(\phi_\mathrm{B}; X_1, \dots, X_k),
\]
where we set
\[
	\phi_\mathrm{B}(\lambda) = k! \prod_{j=1}^k \frac{1}{m_j! (j!)^{m_j}}.
\]

\begin{theorem}[Fa\`{a} di Bruno's formula]
	Let $k$ be a positive integer. Suppose $f(x), g(x)$, and $h(x)$ are functions with all necessary derivatives defined. For partitions $\lambda = (1^{m_1}, \dots, k^{m_k}) \vdash k$, we define
	\[
		\phi_\mathrm{F}(\lambda) = k! f^{(\ell(\lambda))}(g(x)) \prod_{j=1}^k \frac{1}{m_j! (j!)^{m_j}}.
	\]
	Then we have
	\begin{align*}
		\frac{\dd^k}{\dd x^k} f(g(x)) &= \mathrm{Tr}_k(\phi_\mathrm{F}; g^{(1)}(x), g^{(2)}(x), \dots, g^{(k)}(x)).
	\end{align*}
	In particular, by applying this formula with $f(x) = \exp(x)$ and $g(x) = \log h(x)$, we obtain
	\[
		\frac{h^{(k)}(x)}{h(x)} = \scB_k ((\log h)^{(1)}(x), (\log h)^{(2)}(x), \dots, (\log h)^{(k)}(x)).
	\]
\end{theorem}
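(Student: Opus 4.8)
The plan is to prove the main formula by expanding the single-variable function $t \mapsto f(g(x+t))$ as a Taylor series about $t=0$ in two different ways and comparing the coefficient of $t^k$. The starting observation is that, for fixed $x$,
\[
	\frac{\dd^k}{\dd x^k} f(g(x)) = \frac{\dd^k}{\dd t^k}\Big|_{t=0} f(g(x+t)) = k! \cdot [t^k]\, f(g(x+t)),
\]
where $[t^k]$ denotes extraction of the coefficient of $t^k$ in the expansion in $t$. It therefore suffices to compute this coefficient.

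First I would substitute the Taylor expansion $g(x+t) = g(x) + \sum_{j=1}^\infty g^{(j)}(x)\, t^j/j!$ into the Taylor expansion of $f$ about the base point $g(x)$, giving
\[
	f(g(x+t)) = \sum_{\ell=0}^\infty \frac{f^{(\ell)}(g(x))}{\ell!}\left(\sum_{j=1}^\infty g^{(j)}(x)\, \frac{t^j}{j!}\right)^{\!\ell}.
\]
The heart of the argument is then a purely combinatorial coefficient extraction. Expanding the $\ell$-th power by the multinomial theorem produces a sum over ordered tuples $(j_1, \dots, j_\ell)$ of positive integers; the total degree in $t$ is $j_1 + \cdots + j_\ell$, so the contribution to $[t^k]$ comes from tuples with $\sum_i j_i = k$. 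Grouping these tuples according to the partition $\lambda = (1^{m_1}, \dots, k^{m_k}) \vdash k$ that their underlying multiset determines, the number of ordered tuples yielding a given $\lambda$ is the multinomial coefficient $\ell!/\prod_j m_j!$ (with $\ell = \ell(\lambda)$), and each such tuple contributes $\prod_j (g^{(j)}(x)/j!)^{m_j}$.

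Assembling these pieces, the factor $\ell!$ from the multinomial coefficient cancels the $1/\ell!$ in the Taylor expansion of $f$, so that
\[
	[t^k]\, f(g(x+t)) = \sum_{\lambda \vdash k} f^{(\ell(\lambda))}(g(x)) \prod_{j=1}^k \frac{(g^{(j)}(x))^{m_j}}{m_j!\,(j!)^{m_j}}.
\]
Multiplying by $k!$ and comparing with the definitions of $\phi_\mathrm{F}$ and of $\mathrm{Tr}_k$ yields the first claim. For the ``in particular'' statement I would simply specialize $f = \exp$ and $g = \log h$: then $f^{(\ell)} = \exp$ for every $\ell$, so $f^{(\ell(\lambda))}(g(x)) = \exp(\log h(x)) = h(x)$ factors out of the sum, while the left-hand side becomes $h^{(k)}(x)$. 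Dividing by $h(x)$ and recognizing the remaining sum as $\mathrm{Tr}_k(\phi_\mathrm{B}; (\log h)^{(1)}(x), \dots, (\log h)^{(k)}(x)) = \scB_k(\dots)$ gives the Bell-polynomial identity.

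I expect the main obstacle to be twofold. The delicate bookkeeping step is verifying the multinomial count $\ell!/\prod_j m_j!$ and confirming that every factorial cancels correctly to reproduce $\phi_\mathrm{F}$ exactly. The second, more conceptual, issue is rigor: when $f$ and $g$ are merely $k$-times differentiable rather than analytic, the infinite series above need not converge. I would resolve this by observing that $\frac{\dd^k}{\dd x^k} f(g(x))$ depends only on the $k$-jets of $f$ and $g$ at the relevant points, so one may replace $f$ and $g$ by their degree-$k$ Taylor polynomials; all the manipulations then become identities of polynomials modulo $t^{k+1}$, where convergence is a non-issue. (An alternative route, avoiding generating functions entirely, is induction on $k$ via the chain and product rules, but I find the coefficient-extraction argument more transparent and better matched to the Bell-polynomial formalism already set up in the excerpt.)
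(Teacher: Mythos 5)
Your argument is correct. Note, however, that the paper itself offers no proof of this theorem: it is stated as a classical result, with the reader referred to the cited literature (Johnson, Craik, Sullivan) for proofs and history, so there is no in-paper argument to compare against. Your coefficient-extraction proof is the standard formal-power-series derivation and all the bookkeeping checks out: the multinomial count $\ell!/\prod_j m_j!$ of ordered tuples realizing a given partition cancels the $1/\ell!$ from the Taylor expansion of $f$, and the remaining product $\prod_j (g^{(j)}(x))^{m_j}/(m_j!\,(j!)^{m_j})$, multiplied by $k!$, reproduces $\phi_\mathrm{F}(\lambda)\,X_\lambda$ exactly; the specialization $f=\exp$, $g=\log h$ likewise matches the paper's identification $\scB_k = \mathrm{Tr}_k(\phi_\mathrm{B};\cdot)$, which follows from expanding the generating function \eqref{def:Bell-polynomial} by the same multinomial computation. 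Your remark on rigor is also the right one: for merely $k$-times differentiable $f$ and $g$ one should first observe (by induction via the chain and product rules) that $\frac{\dd^k}{\dd x^k}f(g(x))$ is a universal polynomial in the jets $f^{(i)}(g(x))$, $g^{(j)}(x)$ with $i,j\le k$, after which everything reduces to an identity of polynomials truncated modulo $t^{k+1}$, where convergence is moot. The only caution is to state that jet-dependence claim as an independent (easy) induction rather than as a consequence of the formula being proved, so as to avoid circularity.
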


The observation that Fa\`{a} di Bruno's formula can be applied to derive Lehmer's theorem is due to Herrera-Poyatos and Moree~\cite{HerreraMoree2021}. Indeed, it is easy to see that
\[
	\lim_{x \to 1} \frac{\dd^j}{\dd x^j} \log \Phi_n(x) = -\sum_{\substack{0 < k < n \\ (k, n) = 1}} \frac{ (j-1)!}{(e^{2\pi ik/n} - 1)^j},
\]
which matches $-(j-1)! \varsigma_j(n)$ as shown in \cite[Theorem 2]{Lehmer1966}. Therefore, Fa\`{a} di Bruno's formula implies that
\begin{align*}
	\frac{\Phi_n^{(k)}(1)}{\Phi_n(1)} &= \scB_k(-\varsigma_1(n), \dots, -(k-1)! \varsigma_k(n))\\
		&= \mathrm{Tr}_k(\phi_\Phi; \varsigma_1(n), \dots, \varsigma_k(n)).
\end{align*}

In the following sections, we provide further reinterpretations of the theorem of Amdeberhan--Ono--Singh, as well as several results on MacMahon's partition functions, their generalizations, and reciprocal sums of parts in integer partitions, all viewed through the lens of the Fa\`{a} di Bruno formula.

\section*{Acknowledgement}

This work was motivated by Byungchan Kim's talk at a conference ``Number Theory in the spirit of Ramanujan and Berndt", held in June 2025 at Yonsei University, South Korea, and by discussions on the topics covered in \cref{sec:SRP}. The author is also grateful to him for his helpful comments on an earlier version of this article. The author was supported by JSPS KAKENHI (JP21K18141 and JP24K16901) and the MEXT Initiative through Kyushu University's Diversity and Super Global Training Program for Female and Young Faculty (SENTAN-Q).

\section{Amdeberhan--Ono--Singh's theorem}

First, we reinterpret the original proof by Amdeberhan--Ono--Singh~\cite{AmdeberhanOnoSingh2024-pre} using Fa\`{a} di Bruno's formula. By Euler's pentagonal number theorem, we have
\[
	\eta(\tau) \coloneqq q^{1/24} \prod_{m=1}^\infty (1-q^m) = q^{\frac{1}{24}} \sum_{n \in \Z} (-1)^n q^{\frac{n(3n+1)}{2}} = \sum_{n \in \Z} (-1)^n q^{\frac{(6n+1)^2}{24}},
\]
where $q = e^{2\pi i\tau}$. Applying Fa\`{a} di Bruno's formula with respect to $\tau$, we obtain
\[
	\left(\frac{2\pi i}{24}\right)^k V_{2k}(q) = \frac{\eta^{(k)}(\tau)}{\eta(\tau)} = \scB_k( (\log \eta)^{(1)}(\tau), \dots, (\log \eta)^{(k)}(\tau)).
\]
Since
\[
	\frac{\dd}{\dd \tau} \log \eta(\tau) = \frac{2\pi i}{24} E_2(q)
\]
and the ring $\C[E_2(q), E_4(q), E_6(q)]$ is closed under the differentiation by the Ramanujan's identities:
\[
	\frac{1}{2\pi i} \frac{\dd E_2}{\dd \tau} = \frac{E_2^2 - E_4}{12}, \qquad \frac{1}{2\pi i} \frac{\dd E_4}{\dd \tau} = \frac{E_2 E_4 - E_6}{3}, \qquad \frac{1}{2\pi i} \frac{\dd E_6}{\dd \tau} = \frac{E_2 E_6 - E_4^2}{2},
\]
it follows that
\[
	V_{2k}(q) \in \C[E_2(q), E_4(q), E_6(q)],
\]
recovering the result by Berndt--Yee~\cite{BerndtYee2003}. While some additional argument is required to fully derive the polynomial $\phi_V(\lambda)$ of Amdeberhan--Ono--Singh, the above calculation already yields an explicit formula of similar type as follows.

\begin{theorem}\label{thm:V-even}
	For a positive integer $k$, we have
	\[
		V_{2k}(q) = 24^k \scB_k(F_1(q), F_2(q), \dots, F_k(q)),
	\]
	where we define $F_1(q) = \frac{1}{24} E_2(q)$, and for $j \ge 2$, 
	\[
		F_j(q) \coloneqq - \sum_{n=1}^\infty n^{j-1} \sigma_1(n) q^n.
	\]
\end{theorem}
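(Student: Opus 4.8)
The plan is to start from the identity already established in the discussion preceding the theorem, namely
\[
	\left(\frac{2\pi i}{24}\right)^k V_{2k}(q) = \scB_k\bigl((\log\eta)^{(1)}(\tau), \dots, (\log\eta)^{(k)}(\tau)\bigr),
\]
where the derivatives are taken with respect to $\tau$. The goal is then purely to compute the logarithmic derivatives $(\log\eta)^{(j)}(\tau)$ and rescale the Bell polynomial appropriately. The key structural fact I will exploit is the homogeneity of Bell polynomials: from the generating function \eqref{def:Bell-polynomial} one sees that replacing each $X_j$ by $c^j X_j$ (equivalently, rescaling $t \mapsto ct$) yields $\scB_k(c X_1, c^2 X_2, \dots, c^k X_k) = c^k \scB_k(X_1, \dots, X_k)$. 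This lets me absorb the factor $(2\pi i/24)^k$ and the powers of $2\pi i$ coming from repeated $\tau$-differentiation into a clean rescaling.

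First I would compute $(\log\eta)^{(j)}$ explicitly. Starting from $\log\eta(\tau) = \frac{2\pi i\tau}{24} + \sum_{m=1}^\infty \log(1-q^m)$ with $q = e^{2\pi i\tau}$, I would expand $\log(1-q^m) = -\sum_{n=1}^\infty \frac{q^{mn}}{n}$ and differentiate once in $\tau$ to obtain
\[
	\frac{\dd}{\dd\tau}\log\eta(\tau) = \frac{2\pi i}{24} - 2\pi i \sum_{n=1}^\infty \sigma_1(n) q^n = \frac{2\pi i}{24} E_2(q),
\]
which is the already-quoted first derivative. For $j \ge 2$, each further application of $\frac{\dd}{\dd\tau}$ brings down a factor $2\pi i \cdot n$ from $q^n = e^{2\pi i n\tau}$, so
\[
	(\log\eta)^{(j)}(\tau) = (2\pi i)^j \left(-\sum_{n=1}^\infty n^{j-1}\sigma_1(n) q^n\right) = (2\pi i)^j F_j(q),
\]
while $(\log\eta)^{(1)}(\tau) = 2\pi i \cdot \frac{1}{24}E_2(q) = 2\pi i\, F_1(q)$. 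Thus in all cases $(\log\eta)^{(j)}(\tau) = (2\pi i)^j F_j(q)$, where the $j=1$ term is matched by the definition $F_1(q) = \frac{1}{24}E_2(q)$.

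With these in hand I would substitute into the Bell polynomial and apply homogeneity with $c = 2\pi i$:
\[
	\scB_k\bigl((2\pi i) F_1, (2\pi i)^2 F_2, \dots, (2\pi i)^k F_k\bigr) = (2\pi i)^k \scB_k(F_1, \dots, F_k).
\]
Combining this with the starting identity gives $\left(\frac{2\pi i}{24}\right)^k V_{2k}(q) = (2\pi i)^k \scB_k(F_1, \dots, F_k)$, and cancelling $(2\pi i)^k$ from both sides leaves $V_{2k}(q) = 24^k \scB_k(F_1, \dots, F_k)$, as claimed. I do not anticipate a genuine obstacle here, since all the conceptual work was done in setting up the $\eta$-quotient identity via Fa\`{a} di Bruno's formula; the only point requiring care is bookkeeping of the $2\pi i$ factors and verifying that the definition of $F_j$ is uniform across $j \ge 1$ once one recognizes $F_1 = \frac{1}{24}E_2$ as the $n^0$-weighted divisor sum plus the constant $\frac{1}{24}$ term from the $q^{1/24}$ prefactor of $\eta$.
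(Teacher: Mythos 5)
Your proposal is correct and follows essentially the same route as the paper, which derives the theorem from the identity $\left(\frac{2\pi i}{24}\right)^k V_{2k}(q) = \scB_k((\log\eta)^{(1)}(\tau),\dots,(\log\eta)^{(k)}(\tau))$ obtained in the discussion immediately preceding the statement and leaves the remaining bookkeeping implicit. Your explicit computation of $(\log\eta)^{(j)}(\tau) = (2\pi i)^j F_j(q)$ and your use of the homogeneity $\scB_k(cX_1,\dots,c^kX_k)=c^k\scB_k(X_1,\dots,X_k)$ are exactly the details the paper omits, and they are carried out correctly.
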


Although our discussion so far focuses on the even-index case $V_{2k}(q)$, a natural generalization to all $k$, regardless of parity, emerges when viewed from the perspective of Jacobi forms. Indeed, by fully exploiting the Jacobi triple product, we can derive a similar result for
\[
	V_k(q) \coloneqq \frac{\sum_{n \in \Z} (-1)^n (6n+1)^k q^{\frac{n(3n+1)}{2}}}{\sum_{n \in \Z} (-1)^n q^{\frac{n(3n+1)}{2}}}.
\]
As observed by Amdeberhan--Griffin--Ono~\cite{AmdeberhanGriffinOno2025-pre}, the traces of partition Eisenstein series discussed here reflect a deeper structure arising from underlying Jacobi forms.

\begin{theorem}\label{thm:V-general}
	For a positive integer $k$, we have
	\[
		V_k(q) = 6^k \scB_k(G_1(q), G_2(q), \dots, G_k(q)),
	\]
	where we define
	\[
		G_1(q) = \frac{1}{6} + \sum_{n=1}^\infty \sum_{d \mid n} \left(\frac{-3}{d}\right) q^n,
	\]
	and for $j \ge 2$,
	\[
		G_j(q) = \begin{cases}
			\displaystyle{- \sum_{n=1}^\infty \sum_{\substack{d \mid n \\ n/d \not\equiv 0\ (3)}} d^{j-1} q^n} &\text{if $j$ is even},\\
			\displaystyle{\sum_{n=1}^\infty \sum_{d \mid n} \left(\frac{-3}{n/d}\right) d^{j-1} q^n} &\text{if $j \ge 3$ is odd},
		\end{cases}
	\]
	where $\left(\frac{\cdot}{\cdot}\right)$ is the Kronecker symbol.
\end{theorem}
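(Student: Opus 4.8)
The plan is to promote the one-variable calculation behind \cref{thm:V-even} to a genuinely two-variable theta function, using an elliptic variable $z$ to record the linear exponent $6n+1$ (which is precisely what distinguishes odd $k$ from even $k$). I would set
\[
	\Theta(z, \tau) \coloneqq \sum_{n \in \Z} (-1)^n z^{6n+1} q^{\frac{(6n+1)^2}{24}},
\]
and let $D \coloneqq z\,\dd/\dd z$, so that each application of $D$ pulls down one factor $6n+1$. Then the numerator of $D^k\Theta|_{z=1}$ equals $q^{1/24}$ times the numerator of $V_k(q)$, while $\Theta(1,\tau) = \eta(\tau)$ equals $q^{1/24}$ times its denominator; hence $V_k(q) = (D^k\Theta)|_{z=1}/\Theta(1,\tau)$. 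Setting $H(x) \coloneqq \Theta(e^x, \tau)$ converts $D$ into $\dd/\dd x$ and $z = 1$ into $x = 0$, so the logarithmic form of Fa\`a di Bruno's formula (the ``in particular'' clause of the theorem in \cref{sec:Intro}) yields
\[
	V_k(q) = \frac{H^{(k)}(0)}{H(0)} = \scB_k(a_1, \dots, a_k), \qquad a_j \coloneqq (D^j \log\Theta)\big|_{z=1}.
\]

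It then remains to compute the $a_j$, for which I would factor $\Theta$ via the Jacobi triple product. Extracting $z q^{1/24}$ and writing $u = z^6$, the remaining sum $\sum_n (q^3)^{n^2/2}(-u q^{1/2})^n$ factors into
\[
	\Theta(z,\tau) = z q^{1/24}\prod_{m=1}^\infty (1 - q^{3m})(1 - z^6 q^{3m-1})(1 - z^{-6}q^{3m-2}),
\]
which correctly specializes to $\eta(\tau)$ at $z=1$, since the three residue classes modulo $3$ recombine into $\prod_m(1-q^m)$. Taking logarithms, $\log\Theta$ splits as $\log z$, a $z$-independent piece, and the two families $\log(1 - z^6 q^{3m-1})$ and $\log(1 - z^{-6}q^{3m-2})$. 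The latter depend on $z$ only through $u = z^6$, on which $D$ acts as $6 D_u$ with $D_u \coloneqq u\,\dd/\dd u$; combined with $D\log z = 1$ and $D^j\log z = 0$ for $j \ge 2$, this yields $a_1 = 1 + 6(D_u S)|_{u=1}$ and $a_j = 6^j(D_u^j S)|_{u=1}$ for $j \ge 2$, where $S$ denotes the sum of the two logarithmic families.

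The heart of the proof is evaluating $(D_u^j S)|_{u=1}$ and matching it to the piecewise $G_j$. Expanding $\log(1 - u q^{3m-1}) = -\sum_{r\ge1}(u q^{3m-1})^r/r$ and its companion series, applying $D_u^j$ (which sends $u^r \mapsto r^j u^r$ and $u^{-r}\mapsto (-r)^j u^{-r}$), setting $u = 1$, and collecting terms according to $n = r(3m-1)$ and $n = r(3m-2)$ gives
\[
	(D_u^j S)\big|_{u=1} = -\sum_{n\ge1}q^n\Bigg(\sum_{\substack{d\mid n\\ n/d\equiv 2\,(3)}}d^{j-1} + (-1)^j\sum_{\substack{d\mid n\\ n/d\equiv 1\,(3)}}d^{j-1}\Bigg).
\]
For even $j$ the two congruence conditions merge into $n/d\not\equiv 0\ (3)$, reproducing $G_j$; for odd $j\ge 3$ the sign difference is exactly the Kronecker symbol $\left(\frac{-3}{n/d}\right)$, which after reindexing $d\leftrightarrow n/d$ reproduces $G_j$; and for $j=1$ the stray constant $1$ coming from $D\log z$ supplies the term $\tfrac16$ in $G_1$. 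Hence $a_j = 6^j G_j(q)$ for all $j$, and the weighted homogeneity $\scB_k(\lambda X_1, \lambda^2 X_2, \dots, \lambda^k X_k) = \lambda^k \scB_k(X_1, \dots, X_k)$ (read off directly from \eqref{def:Bell-polynomial}) with $\lambda = 6$ turns $\scB_k(a_1, \dots, a_k)$ into $6^k\scB_k(G_1, \dots, G_k)$, as claimed. I expect the main difficulty to lie precisely in this last step: keeping the modulo-$3$ divisor conditions, the parity-dependent signs, and the exceptional $j=1$ contribution all aligned with the case split defining $G_j$, the underlying term-by-term manipulation of the logarithmic series being routine as an identity of formal power series in $q$.
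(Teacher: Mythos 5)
Your proposal is correct and follows essentially the same route as the paper: both apply the logarithmic form of Fa\`a di Bruno's formula to the Jacobi triple product factorization of $\Theta_q(z)$ and then expand the logarithmic derivatives of the factors $(1-\zeta^6 q^{3m-1})$ and $(1-\zeta^{-6}q^{3m-2})$ as Lambert-type series to identify $6^jG_j$ (your computation of $(D_u^jS)|_{u=1}$ and the weighted homogeneity of $\scB_k$ are exactly the steps the paper leaves implicit in ``by repeatedly differentiating and taking the limit''). The only stray remark is that for odd $j\ge 3$ no reindexing $d\leftrightarrow n/d$ is needed---your displayed formula already matches $G_j$ as stated---but this does not affect the argument.
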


\begin{proof}
	By applying Fa\`{a} di Bruno's formula to Jacobi's triple product
	\[
		\Theta_q(z) = \sum_{n \in \Z} (-1)^n q^{\frac{n(3n+1)}{2}} \zeta^{6n+1} = \zeta \prod_{m=1}^\infty (1- q^{3m}) (1-q^{3m-1}\zeta^6) (1-q^{3m-2} \zeta^{-6}),
	\]
	where $\zeta = e^{2\pi i z}$, we have
	\[
		(2\pi i)^k V_k(q) = \lim_{z \to 0} \frac{\Theta_q^{(k)} (z)}{\Theta_q(z)} = \scB_k((\log \Theta_q)^{(1)}(0), \dots, (\log \Theta_q)^{(k)}(0)).
	\]
	Using the infinite product expression, we find
	\[
		\frac{\dd}{\dd z} \log \Theta_q(z) = 2\pi i \bigg(1 - 6 \sum_{m, n \ge 1} q^{(3m-1)n} \zeta^{6n} + 6 \sum_{m, n \ge 1} q^{(3m-2)n} \zeta^{-6n} \bigg).
	\]
	By repeatedly differentiating this expression and then taking the limit $z \to 0$ (i.e., $\zeta \to 1$), we arrive at the desired result.
\end{proof}

For even $k$, \cref{thm:V-even} and \cref{thm:V-general} provide two different expressions for the same quantity. For example, in the case $k=2$, we have
\[
	V_2(q) = 36 (G_1(q)^2 + G_2(q)) = 24 F_1(q) = E_2(q).
\]

Finally, we note that a similar observation applies to another function,
\[
	U_{2k}(q) \coloneqq \frac{\sum_{n=0}^\infty (-1)^n (2n+1)^{2k+1} q^{\frac{n(n+1)}{2}}}{\sum_{n=0}^\infty (-1)^n (2n+1) q^{\frac{n(n+1)}{2}}},
\]
which was also studied by Amdeberhan--Ono--Singh~\cite{AmdeberhanOnoSingh2024-pre}. By focusing on the well-known identity
\[
	\sum_{n=0}^\infty (-1)^n (2n+1) q^{\frac{n(n+1)}{2}} = \prod_{m=1}^\infty (1-q^m)^3,
\]
the corresponding result follows in much the same way. Compared with \cref{thm:V-even}, this is a remarkably unified expression.

\begin{theorem}
	For a positive integer $k$, we have
	\[
		U_{2k}(q) = 8^k \scB_k(3F_1(q), 3F_2(q), \dots, 3F_k(q)),
	\]
	where $F_j(q)$ is defined as in \cref{thm:V-even}.
\end{theorem}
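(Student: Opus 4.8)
The plan is to imitate the proof of \cref{thm:V-general} but with a Jacobi-type variable chosen so that the resulting theta function is \emph{literally a perfect cube}; this is what will manufacture the factor of $3$ sitting inside each argument of the Bell polynomial. Introduce
\[
	\Xi(z) \coloneqq \sum_{n=0}^\infty (-1)^n (2n+1)\, q^{\frac{n(n+1)}{2}}\, \zeta^{(2n+1)^2}, \qquad \zeta = e^{2\pi i z}.
\]
Since $\frac{\dd^k}{\dd z^k}\zeta^{M}\big|_{z=0} = (2\pi i M)^k$, differentiating $k$ times with $M = (2n+1)^2$ reproduces exactly the numerator $\sum_{n\ge0}(-1)^n(2n+1)^{2k+1}q^{n(n+1)/2}$ of $U_{2k}(q)$, while $\Xi(0) = \sum_{n\ge0}(-1)^n(2n+1)q^{n(n+1)/2} = \prod_{m\ge1}(1-q^m)^3$ by the quoted identity is the denominator. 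Hence
\[
	(2\pi i)^k\, U_{2k}(q) = \frac{\Xi^{(k)}(0)}{\Xi(0)}.
\]
Note that $\Xi(0)\neq 0$, so unlike a naive odd theta function there is no vanishing denominator to work around, and the logarithmic-derivative form of Fa\`a di Bruno's formula will apply cleanly once $\Xi$ is identified.

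The key step is the elementary identity $(2n+1)^2 = 8\cdot\frac{n(n+1)}{2} + 1$, which lets me write $\zeta^{(2n+1)^2} = \zeta\,(\zeta^8)^{n(n+1)/2}$ and therefore
\[
	\Xi(z) = \zeta \sum_{n=0}^\infty (-1)^n (2n+1)\, (q\zeta^8)^{\frac{n(n+1)}{2}} = \zeta \prod_{m=1}^\infty \bigl(1 - q^m \zeta^{8m}\bigr)^3,
\]
where the last equality is the \emph{same} cube identity applied with $q$ replaced by $q\zeta^8$. Recognizing $q\zeta^8 = e^{2\pi i(\tau + 8z)}$ together with $\prod_{m\ge1}(1-x^m) = x^{-1/24}\eta$, this collapses to
\[
	\Xi(z) = q^{-1/8}\, \eta(\tau + 8z)^3,
\]
in which $q^{-1/8}$ is a constant in $z$. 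I expect this repackaging of $\Xi$ as an explicit cube of a shifted Dedekind eta to be the crux of the argument; everything after is bookkeeping. (The only subtlety is the legitimacy of the substitution $x = q\zeta^8$, which is a convergence matter that is fine in a neighborhood of $z=0$, sufficient for extracting derivatives at $z=0$.)

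From here the chain rule $\frac{\dd}{\dd z} = 8\frac{\dd}{\dd\tau}$ acting on $\eta(\tau+8z)$ gives $\Xi^{(k)}(0) = q^{-1/8}\,8^k\,(\eta^3)^{(k)}(\tau)$, so the constant $q^{-1/8}$ cancels in the ratio and
\[
	(2\pi i)^k\, U_{2k}(q) = 8^k\, \frac{(\eta^3)^{(k)}(\tau)}{\eta(\tau)^3}.
\]
Applying the logarithmic-derivative form of Fa\`a di Bruno's formula to $h = \eta^3$ gives $\frac{(\eta^3)^{(k)}}{\eta^3} = \scB_k\bigl((\log\eta^3)^{(1)}, \dots, (\log\eta^3)^{(k)}\bigr)$, and since $\log\eta^3 = 3\log\eta$ with $(\log\eta)^{(j)}(\tau) = (2\pi i)^j F_j(q)$ (the same computation that underlies \cref{thm:V-even}, whose $j=1$ case $\frac{\dd}{\dd\tau}\log\eta = \frac{2\pi i}{24}E_2 = 2\pi i F_1$ is recorded above), the arguments are $(\log\eta^3)^{(j)} = (2\pi i)^j\,(3F_j)$. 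Finally the weighted homogeneity $\scB_k(c X_1, c^2 X_2, \dots, c^k X_k) = c^k\,\scB_k(X_1,\dots,X_k)$, which is immediate from \eqref{def:Bell-polynomial}, with $c = 2\pi i$ absorbs the factor $(2\pi i)^k$ on both sides, leaving
\[
	U_{2k}(q) = 8^k\, \scB_k\bigl(3F_1(q), 3F_2(q), \dots, 3F_k(q)\bigr),
\]
as claimed.
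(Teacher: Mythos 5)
Your proof is correct and, once the auxiliary variable $z$ is eliminated via $\Xi(z) = q^{-1/8}\eta(\tau+8z)^3$, it reduces to exactly the argument the paper intends (and omits): writing the quotient as $8^k(\eta^3)^{(k)}(\tau)/\eta(\tau)^3$ using $\eta(\tau)^3 = \sum_{n\ge0}(-1)^n(2n+1)q^{(2n+1)^2/8}$ and applying the logarithmic form of Fa\`a di Bruno's formula with $\log\eta^3 = 3\log\eta$, so that the arguments become $3(\log\eta)^{(j)} = (2\pi i)^j\, 3F_j$. The detour through the Jacobi variable is harmless but unnecessary, since the identity $\tfrac{n(n+1)}{2}+\tfrac18 = \tfrac{(2n+1)^2}{8}$ already lets one differentiate directly in $\tau$, exactly as in the proof of \cref{thm:V-even}.
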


\section{MacMahon's partition functions}

In 1920, MacMahon~\cite{MacMahon1920} extended the classical divisor sums from the perspective of partition theory. For a positive integer $k$, \emph{MacMahon's partition function} $M_k(n)$ is defined by the generating series
\[
	A_k(q) \coloneqq \sum_{n=1}^\infty M_k(n) q^n = \sum_{0 < m_1 < m_2 < \cdots < m_k} \frac{q^{m_1 + m_2 + \cdots + m_k}}{(1-q^{m_1})^2 (1-q^{m_2})^2 \cdots (1-q^{m_k})^2}.
\]
A simple observation, essentially the same as one found in MacMahon's original work, shows that the following identity holds:
\[
	\mathcal{A}(X) \coloneqq 1 + \sum_{k=1}^\infty A_k(q) X^k = \prod_{m=1}^\infty \left(1 + \frac{q^m}{(1-q^m)^2} X\right).
\]
Applying the Fa\`{a} di Bruno formula to this identity at $X=0$, we obtain the following, which provides a new proof of the quasi-modularity of $A_k(q)$.

\begin{theorem}\label{thm:Ak-quasi}
	For a positive integer $k$, we have
	\[
		A_k(q) = \frac{1}{k!} \scB_k(H_1(q), H_2(q), \dots, H_k(q)),
	\]
	where
	\begin{align*}
		H_j(q) &= \frac{(-1)^{j-1}(j-1)!}{(2j-1)!} \sum_{l=1}^{2j} t(j,l) L_l(q),\\
		L_l(q) &= \sum_{n=1}^\infty \sigma_{l-1}(n) q^n,
	\end{align*}
	and $t(j,l) \in \Z$ are the central factorial numbers (see OEIS~\cite[A008955]{OEIS}) defined by
	\[
		\sum_{l=1}^{2j} t(j, l) x^{l-1} = \prod_{|i| < j} (x-i) = (2j-1)! \binom{x+j-1}{2j-1}.
	\]
\end{theorem}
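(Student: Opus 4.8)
The plan is to apply the Bell polynomial form of Fa\`a di Bruno's formula, namely $h^{(k)}(x)/h(x) = \scB_k((\log h)^{(1)}(x), \dots, (\log h)^{(k)}(x))$, to the function $h(X) = \mathcal{A}(X)$ at the point $X = 0$. The product formula gives $\mathcal{A}(0) = 1$, while comparing Taylor coefficients in $\mathcal{A}(X) = 1 + \sum_{k \ge 1} A_k(q) X^k$ yields $\mathcal{A}^{(k)}(0) = k!\, A_k(q)$. Substituting these two facts into Fa\`a di Bruno's formula immediately produces
\[
	A_k(q) = \frac{1}{k!}\, \scB_k\bigl((\log \mathcal{A})^{(1)}(0), \dots, (\log \mathcal{A})^{(k)}(0)\bigr),
\]
so the theorem reduces to the single-variable claim $(\log \mathcal{A})^{(j)}(0) = H_j(q)$ for every $j \ge 1$.

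To compute these logarithmic derivatives, I would take the logarithm of the product and differentiate termwise. Writing $a_m = q^m/(1-q^m)^2$, we have $\log \mathcal{A}(X) = \sum_{m=1}^\infty \log(1 + a_m X)$, and since $\frac{\dd^j}{\dd X^j}\log(1 + a_m X) = (-1)^{j-1}(j-1)!\, a_m^j (1 + a_m X)^{-j}$, evaluation at $X = 0$ gives
\[
	(\log \mathcal{A})^{(j)}(0) = (-1)^{j-1}(j-1)! \sum_{m=1}^\infty \frac{q^{jm}}{(1-q^m)^{2j}}.
\]
Comparing with the definition of $H_j(q)$, the task becomes proving the $q$-series identity
\[
	\sum_{m=1}^\infty \frac{q^{jm}}{(1-q^m)^{2j}} = \frac{1}{(2j-1)!} \sum_{l=1}^{2j} t(j,l)\, L_l(q).
\]

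For this identity I would expand both sides into double sums over a ``part'' index and a ``multiplicity'' index and match them. On the left, the negative binomial theorem gives $\frac{q^{jm}}{(1-q^m)^{2j}} = \sum_{N \ge 1} \binom{N+j-1}{2j-1} q^{mN}$, so the left side equals $\sum_{m, N \ge 1} \binom{N+j-1}{2j-1} q^{mN}$. On the right, writing $L_l(q) = \sum_{d \ge 1} d^{l-1} q^d/(1-q^d) = \sum_{d, m \ge 1} d^{l-1} q^{dm}$ and invoking the defining relation of the central factorial numbers, $\sum_{l=1}^{2j} t(j,l)\, d^{l-1} = (2j-1)!\binom{d+j-1}{2j-1}$, collapses the $L_l$-combination to $\sum_{d, m \ge 1} \binom{d+j-1}{2j-1} q^{dm}$. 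The two double sums agree after relabeling, completing the proof.

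The main obstacle is this final identity, and specifically recognizing that the central factorial numbers are exactly the right change of basis: the left-hand series $\sum_m q^{jm}/(1-q^m)^{2j}$ naturally produces the polynomial $\binom{N+j-1}{2j-1}$ in the multiplicity variable $N$, and one must see that this polynomial, written in the monomial basis $\{d^{l-1}\}_{l}$, has the $t(j,l)$ as its coefficients --- this is precisely the content of the product formula $\prod_{|i|<j}(x-i) = (2j-1)!\binom{x+j-1}{2j-1}$. Once this dictionary is in place the matching of the two double sums is routine (and all manipulations are legitimate as identities of formal $q$-series), but identifying the correct auxiliary sequence $t(j,l)$, rather than, say, Stirling numbers, is the conceptual crux.
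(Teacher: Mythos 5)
Your proposal is correct and follows essentially the same route as the paper: apply Fa\`a di Bruno's formula to $\mathcal{A}(X)$ at $X=0$, compute $(\log\mathcal{A})^{(j)}(0)=(-1)^{j-1}(j-1)!\sum_m q^{jm}/(1-q^m)^{2j}$ from the infinite product, and convert to the $L_l$-basis via the negative binomial theorem and the defining relation of the central factorial numbers. The only cosmetic difference is that you differentiate $\log(1+a_mX)$ directly $j$ times, whereas the paper expands the first derivative as a power series in $X$ and reads off coefficients; these are equivalent.
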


\begin{proof}
	The infinite product expression implies that
	\[
		\frac{\dd}{\dd X} \log \mathcal{A}(X) = - \sum_{m=1}^\infty \sum_{n=1}^\infty (-1)^n \frac{q^{mn}}{(1-q^m)^{2n}} X^{n-1},
	\]
	and hence
	\[
		H_j(q) = \lim_{X \to 0} \frac{\dd^j}{\dd X^j} \log \mathcal{A}(X) = (-1)^{j-1} (j-1)!\sum_{m=1}^\infty \frac{q^{jm}}{(1-q^m)^{2j}}.
	\]
	Applying the binomial theorem, we find that
	\[
		\frac{q^{jm}}{(1-q^m)^{2j}} = \sum_{n=1}^\infty \binom{n+j-1}{2j-1} q^{mn} = \frac{1}{(2j-1)!} \sum_{n=1}^\infty \sum_{l=1}^{2j} t(j,l) n^{l-1} q^{mn},
	\]
	which leads to the desired result.
\end{proof}

\begin{corollary}
	For a positive integer $k$, we have $A_k(q) \in \Q[E_2(q), E_4(q), E_6(q)]$, that is, $A_k(q)$ is a quasi-modular form.
\end{corollary}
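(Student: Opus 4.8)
The plan is to combine the explicit formula of \cref{thm:Ak-quasi} with two structural facts: that $\Q[E_2(q), E_4(q), E_6(q)]$ is a ring, and that the complete Bell polynomial has integer coefficients. Since $\scB_k \in \Z[X_1, \dots, X_k]$ and
\[
	A_k(q) = \frac{1}{k!}\scB_k(H_1(q), \dots, H_k(q))
\]
is a $\Q$-polynomial expression in the $H_j(q)$, it suffices to prove that each $H_j(q)$ individually lies in $\Q[E_2(q), E_4(q), E_6(q)]$; the ring structure then closes the argument.

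First I would examine the parity of the polynomial defining the central factorial numbers. Writing
\[
	\prod_{|i|<j}(x-i) = x \prod_{i=1}^{j-1}(x^2 - i^2),
\]
one sees that this is an \emph{odd} polynomial in $x$, so only odd powers $x^{l-1}$ occur, forcing $t(j,l) = 0$ whenever $l$ is odd. Consequently the sum defining $H_j(q)$ collapses to a $\Q$-linear combination of the $L_{2k}(q) = \sum_{n=1}^\infty \sigma_{2k-1}(n) q^n$ for $k = 1, \dots, j$. This vanishing is the crux of the matter: it ensures that only the odd-order divisor sums $\sigma_{2k-1}$ survive (those attached to Eisenstein series), while the terms $L_l(q)$ with $l$ odd, involving even-order divisor sums whose generating functions are \emph{not} quasi-modular, are annihilated.

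Next I would identify each surviving $L_{2k}(q)$ with an Eisenstein series. From the definition $E_{2k}(q) = 1 - \frac{4k}{B_{2k}}\sum_{n=1}^\infty \sigma_{2k-1}(n) q^n$ we obtain $L_{2k}(q) = -\frac{B_{2k}}{4k}(E_{2k}(q) - 1) \in \Q[E_{2k}(q)]$. For $k=1$ this yields $E_2(q)$ directly, while for $k \geq 2$ the series $E_{2k}(q)$ is a modular form of weight $2k$ for $\SL_2(\Z)$ with rational Fourier coefficients, hence a polynomial in $E_4(q)$ and $E_6(q)$ with rational coefficients. In either case $L_{2k}(q) \in \Q[E_2(q), E_4(q), E_6(q)]$, so each $H_j(q)$ lies in this ring, and therefore so does $A_k(q)$.

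The main obstacle is the parity observation in the second step; once one recognizes that the central factorial polynomial is odd and hence kills every term $L_l(q)$ of odd index, the remainder is a routine application of the ring structure of quasi-modular forms, together with the standard fact that $\Q[E_2(q), E_4(q), E_6(q)]$ is precisely the ring of quasi-modular forms. I would also take care to record that rationality of all coefficients is preserved throughout, since $B_{2k} \in \Q$ and the structure theorem for the graded ring of modular forms holds over $\Q$.
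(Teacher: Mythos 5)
Your proof is correct and follows essentially the same route as the paper: the key step in both is that $\prod_{|i|<j}(x-i)$ is an odd polynomial, so $t(j,l)=0$ for odd $l$ and each $H_j(q)$ is a rational linear combination of the $L_{2k}(q)$, hence lies in $\Q[E_2(q),E_4(q),E_6(q)]$. The only cosmetic difference is that the paper cites the Kaneko--Zagier fact that $\Q[E_k(q):k\in 2\Z_{>0}]=\Q[E_2(q),E_4(q),E_6(q)]$ wholesale, whereas you unwind it via the structure theorem for level-one modular forms.
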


\begin{proof}
	Since $\prod_{|i| < j} (x-i)$ is an odd polynomial in $x$, it follows that $t(j,l) = 0$ whenever $l$ is odd. Therefore, $H_j(q) \in \Q[E_2(q), E_4(q), \dots, E_{2j}(q)]$. The fact that the polynomial ring $\Q[E_k(q): k \in 2\Z_{>0}]$ coincides with $\Q[E_2(q), E_4(q), E_6(q)]$ is a fundamental property of quasi-modualr forms (see, for example, Kaneko--Zagier~\cite{KanekoZagier1995}).
\end{proof}

The quasi-modularity of MacMahon's function $A_k(q)$ was originally proved by Andrews--Rose~\cite{AndrewsRose2013}, and an alternative proof based on an explicit generating function expression was later given by Bachmann~\cite{Bachmann2024}. Our proof may be seen as more classical in spirit, as it directly applies the Fa\`{a} di Bruno formula to MacMahon's original observation, followed by a straightforward use of the binomial theorem. 

We remark that Bachmann's explicit formula can be rewritten in the form
\[
	A_k(q) = \Lambda_k(L_2(q), L_4(q), \dots, L_{2k}(q)),
\]
where $\Lambda_k(X_1, \dots, X_k)$ is a polynomial defined as a slight modification of the Bell polynomial generating function \eqref{def:Bell-polynomial}, given by
\[
	\sum_{k=0}^\infty \Lambda_k(X_1, \dots, X_k) t^{2k} = \prod_{j=1}^\infty \exp \left(\frac{2(-1)^{j-1}}{(2j)!} \left(2 \arcsin \frac{t}{2}\right)^{2j} X_j \right).
\]
In joint work with Kang and Shin~\cite{KangMatsusakaShin2025}, we showed that this same polynomial $\Lambda_k$ also describes MacMahon's variants and the generalizations introduced by Rose~\cite{Rose2015}. This allowed us to prove quasi-modularity in a broader setting. The proof of \cref{thm:Ak-quasi} presented here applies equally well to Rose's generalized case. Moreover, in the same joint article with Kang and Shin, we suggested possible connections with Lehmer's theorem, as well as with related work by the author and Shibukawa~\cite{MatsusakaShibukawa2024}. In this context, the Fa\`{a} di Bruno formula seems to provide a natural explanation to these observations.

This method using Fa\`{a} di Bruno's formula is also applicable to another family of MacMahon-type functions recently introduced by Amdeberhan--Andrews--Tauraso~\cite{AmdeberhanAndrewsTauraso2025} and further studied by Nazaroglu--Pandey--Singh~\cite{NazarogluPandeySingh2025-pre}. For positive integers $k, t, r$, we define
\[
	A_{k,t,r}(a; q) \coloneqq \sum_{0 < m_1 < m_2 < \cdots < m_k} \frac{q^{r(m_1 + m_2 + \cdots + m_k)}}{(1+aq^{m_1} + q^{2m_1})^t \cdots (1+aq^{m_k} + q^{2m_k})^t}.
\]
While $a$ is treated as an integer in this context, the argument does not require it. We note that $A_{k,1,1}(-2; q) = A_k(q)$. Applying the Fa\`{a} di Bruno formula, we obtain the following.

\begin{theorem}
	For positive integers $k, t, r$, we have
	\[
		A_{k,t,r}(q) = \frac{1}{k!} \scB_k(H_{1,t,r}(a;q), H_{2,t,r}(a;q), \dots, H_{k,t,r}(a;q)),
	\]
	where we define
	\[
		H_{j,t,r}(a; q) = (-1)^{j-1} (j-1)! \sum_{m=1}^\infty \frac{q^{rjm}}{(1+aq^m+q^{2m})^{tj}}.
	\]
\end{theorem}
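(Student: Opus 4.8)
The plan is to mirror the proof of \cref{thm:Ak-quasi}, applying the Fa\`a di Bruno formula to the generating function of the $A_{k,t,r}(a;q)$ in the auxiliary variable $X$. First I would introduce
\[
	\mathcal{A}_{t,r}(a; X) \coloneqq 1 + \sum_{k=1}^\infty A_{k,t,r}(a; q) X^k
\]
and establish the product expansion
\[
	\mathcal{A}_{t,r}(a; X) = \prod_{m=1}^\infty \left(1 + \frac{q^{rm}}{(1+aq^m+q^{2m})^t} X\right).
\]
This is the same combinatorial observation underlying MacMahon's original case: expanding the product and selecting the linear term from the factors indexed by $m_1 < m_2 < \cdots < m_k$ (and the constant term $1$ from all others) reproduces exactly the summand defining $A_{k,t,r}(a;q)$, while the power $X^k$ records the number of chosen factors.

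Next, since $\mathcal{A}_{t,r}(a;0) = 1$, the coefficient $A_{k,t,r}(a;q)$ equals $\frac{1}{k!}$ times the $k$-th derivative of $\mathcal{A}_{t,r}$ with respect to $X$ at $X=0$, and hence also $\frac{1}{k!} \cdot \mathcal{A}_{t,r}^{(k)}(a;0)/\mathcal{A}_{t,r}(a;0)$. Applying the Fa\`a di Bruno formula in the Bell-polynomial form stated in the theorem (with $f = \exp$ and $g = \log \mathcal{A}_{t,r}$), I obtain
\[
	A_{k,t,r}(a;q) = \frac{1}{k!} \scB_k\big((\log \mathcal{A}_{t,r})^{(1)}(0), \dots, (\log \mathcal{A}_{t,r})^{(k)}(0)\big),
\]
so it remains only to identify the logarithmic derivatives with the $H_{j,t,r}(a;q)$.

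Finally, taking the logarithm of the product turns it into a sum, and expanding each factor via the power series $\log(1 + cX) = \sum_{n\ge 1} \frac{(-1)^{n-1}}{n} c^n X^n$ isolates the coefficient of $X^j$. Differentiating $j$ times and setting $X=0$ yields
\[
	(\log \mathcal{A}_{t,r})^{(j)}(0) = (-1)^{j-1}(j-1)! \sum_{m=1}^\infty \left(\frac{q^{rm}}{(1+aq^m+q^{2m})^t}\right)^j = H_{j,t,r}(a;q),
\]
completing the proof. Compared with \cref{thm:Ak-quasi}, this argument is in fact shorter, since there is no need for the binomial-theorem and central-factorial-number step: here $H_{j,t,r}$ is left in its raw series form rather than re-expressed in terms of Eisenstein-type series. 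The only step requiring genuine care is the product expansion in the first paragraph---the MacMahon-type identity that encodes the $A_{k,t,r}(a;q)$ as elementary-symmetric data of the factors $q^{rm}/(1+aq^m+q^{2m})^t$; once this is in hand, the rest is a routine application of Fa\`a di Bruno and the logarithmic series.
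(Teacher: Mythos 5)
Your proposal is correct and is exactly the argument the paper intends: the paper omits the proof as ``entirely analogous to that of \cref{thm:Ak-quasi}'', and your write-up is precisely that analogue (product generating function in $X$, Fa\`a di Bruno at $X=0$, logarithmic series for the $H_{j,t,r}$). Your observation that the binomial-theorem/central-factorial step is unnecessary here is also accurate, since the theorem leaves $H_{j,t,r}$ in its raw series form.
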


\begin{proof}
	The proof is entirely analogous to that of \cref{thm:Ak-quasi} and is therefore omitted.
\end{proof}

For example, the proof of quasi-modularity given by Nazaroglu--Pandey--Singh~\cite{NazarogluPandeySingh2025-pre} can be reorganized in the following way, by reducing the problem to the quasi-modularity of $H_{j,t,r}(a;q)$. 
This does not cover the full scope of their results, but we present one representative example here to illustrate the main idea. For the definition of quasi-modular forms and further details, we refer the reader to their article.

\begin{corollary}
	For positive integers $k, t$, and $a=1$, $A_{k,t,t}(1;q)$ is a quasi-modular form of level $3$.
\end{corollary}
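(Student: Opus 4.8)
The plan is to reduce the quasi-modularity of $A_{k,t,t}(1;q)$ to that of its building blocks, exactly as in the proof of quasi-modularity for $A_k(q)$ following \cref{thm:Ak-quasi}. By the preceding theorem, $A_{k,t,t}(1;q) = \frac{1}{k!}\scB_k(H_{1,t,t}(1;q), \dots, H_{k,t,t}(1;q))$ is a polynomial with rational coefficients in the functions $H_{j,t,t}(1;q)$. Since the quasi-modular forms of level $3$ form a ring containing the constants, it suffices to prove that each
\[
	H_{j,t,t}(1;q) = (-1)^{j-1}(j-1)!\sum_{m=1}^\infty \frac{q^{tjm}}{(1+q^m+q^{2m})^{tj}}
\]
is a quasi-modular form of level $3$. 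The hypothesis $r=t$ is precisely what forces the exponent $rjm=tjm$ in the numerator to match the power $tj$ in the denominator, so that, writing $N \coloneqq tj$, the task becomes to show that $\Psi_N(q) \coloneqq \sum_{m\ge 1} q^{Nm}/(1+q^m+q^{2m})^N$ is quasi-modular of level $3$.

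The key is the factorization $1+x+x^2 = (1-\omega x)(1-\bar\omega x)$ with $\omega = e^{2\pi i/3}$, which simultaneously introduces the level $3$ and exposes the arithmetic. Setting $x=q^m$, the rational function $x^N/[(1-\omega x)^N(1-\bar\omega x)^N]$ is proper (numerator degree $N$ less than denominator degree $2N$), so it admits a partial fraction expansion $\sum_{r=1}^N\!\big(a_r(1-\omega x)^{-r} + \bar a_r(1-\bar\omega x)^{-r}\big)$, where conjugation of the partial fractions forces $b_r=\bar a_r$ by reality of the coefficients. Expanding each term through $(1-\omega x)^{-r} = \sum_{n\ge 0}\binom{n+r-1}{r-1}\omega^n x^n$, substituting $x=q^m$, and summing over $m\ge 1$ rewrites $\Psi_N$ as a Lambert series
\[
	\Psi_N(q) = \sum_{M=1}^\infty\Big(\sum_{n\mid M} g(n)\Big)q^M, \qquad g(n) = \sum_{r=1}^N \binom{n+r-1}{r-1}\,2\Re(a_r \omega^n).
\]
The $n=0$ term has zero coefficient since $g(0) = \lim_{x\to 0} x^N/(1+x+x^2)^N = 0$, which legitimizes the rearrangement. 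The point is that the explicit $a_r$ need not be computed: $g(n)$ is visibly a polynomial in $n$ of degree at most $N-1$ whose coefficients are periodic in $n$ modulo $3$, the periodicity coming entirely from $\omega^n$.

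It remains to recognize this Lambert series as a quasi-modular form of level $3$. Decomposing the $3$-periodic coefficients into Dirichlet characters modulo $3$, one expresses $g(n)$ as a linear combination of monomials $n^s\chi(n)$ with $0\le s\le N-1$ and $\chi$ a character modulo $3$; correspondingly $\Psi_N(q)$ becomes a linear combination of twisted Eisenstein-type series $\sum_M\big(\sum_{d\mid M}\chi(d)d^s\big)q^M$, all of conductor dividing $3$ and hence of level $3$. I expect this final step to be the main obstacle, since it requires the precise statement that these twisted divisor-sum series — including the low-weight cases $s=0,1$, where one meets the weight-one Eisenstein series attached to the odd character modulo $3$ and the quasi-modular weight-two series — all lie in the ring of quasi-modular forms of level $3$. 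The requisite parity matching between $\chi$ and the weight $s+1$, and the identification of the ambient ring, are exactly the points treated carefully by Nazaroglu--Pandey--Singh, to whom I would defer for these facts. Granting them, each $H_{j,t,t}(1;q)$ is quasi-modular of level $3$, and therefore so is $A_{k,t,t}(1;q)$.
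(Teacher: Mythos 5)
Your proposal is correct and follows essentially the same route as the paper: both reduce the claim via the preceding Bell-polynomial theorem to the quasi-modularity of each $H_{j,t,t}(1;q)$, and both ultimately rest on the computation in Section 3 of Nazaroglu--Pandey--Singh identifying $\sum_{m\ge 1} q^{tjm}/(1+q^m+q^{2m})^{tj}$ as a linear combination of level-$3$ Eisenstein-type Lambert series. The only difference is one of packaging: the paper cites that reference for the entire decomposition, whereas you re-derive its intermediate steps (partial fractions over the cube roots of unity, the Lambert-series rearrangement, and the character decomposition modulo $3$) and defer only the final identification of the twisted divisor-sum series as quasi-modular forms of level $3$ --- which is exactly the part the paper outsources as well.
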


\begin{proof}
	As shown in \cite[Section 3]{NazarogluPandeySingh2025-pre},
	\[
		\sum_{m=1}^\infty \frac{q^{tjm}}{(1+q^m+q^{2m})^{tj}}
	\]
	is a linear combination of $1, L_l(q), L_l(q^3)$, and 
	\[
		\sum_{n=1}^\infty \sum_{d \mid n} \left(\frac{-3}{d}\right) d^l q^n
	\]
	with even $l$. Hence, this is a quasi-modular form of level $3$, that is, the quasi-modularity of $A_{k,t,t}(1; q)$ follows.
\end{proof}

\section{Reciprocal sums of parts in integer partitions}\label{sec:SRP}

Finally, we conclude with some remarks on recent work by Byungchan Kim and Eunmi Kim~\cite{KimKim2025}, focusing on what the Fa\`{a} di Bruno formula reveals about their topic of study. For a positive integer $n$, let $\mathcal{D}_n$ denote the set of partitions $\lambda = (\lambda_1, \dots, \lambda_r) \vdash n$ into distinct parts, that is, $\lambda_i \neq \lambda_j$ whenever $i \neq j$. We define the map $\mathrm{srp}: \mathcal{D}_n \to \Q$ by 
\[
	\mathrm{srp}(\lambda) = \sum_{j=1}^{\ell(\lambda)} \frac{1}{\lambda_j}.
\]
Graham~\cite[Theorem 1]{Graham1963} proved that for $n > 77$, there exists a partition $\lambda \in \mathcal{D}_n$ such that $\mathrm{srp}(\lambda) = 1$. Following this result, they studied a more general family of moments defined by
\[
	s_k(n) \coloneqq \sum_{\lambda \in \mathcal{D}_n} \mathrm{srp}(\lambda)^k
\]
for positive integers $k$, and, in collaboration with Bringmann~\cite{BringmannKimKim2024-pre, BringmannKimKim2025-pre}, focused on their asymptotic behavior and modular aspects.

The main focus here is the generating function identity
\[
	\sum_{n=1}^\infty s_k(n) q^n = \lim_{\zeta \to 1} \left(\zeta \frac{\dd}{\dd \zeta}\right)^k \prod_{m=1}^\infty (1 + \zeta^{1/m} q^m)
\]
noted in~\cite{BringmannKimKim2024-pre}. They further introduced the functions
\[
	g_k(q) \coloneqq \sum_{n=1}^\infty \frac{1}{n^k} \sum_{d \mid n} (-1)^{d-1} d^{2k-1} q^n = \lim_{\zeta \to 1} \left(\zeta \frac{\dd}{\dd \zeta}\right)^k \sum_{m=1}^\infty \log(1+\zeta^{1/m} q^m)
\]
to establish that
\[
	\prod_{m=1}^\infty \frac{1}{1+q^m} \cdot \sum_{n=1}^\infty s_k(n) q^n \in \Z[g_1(q), g_2(q), \dots, g_k(q)].
\]
However, their proof followed an inductive structure, and as in the various examples discussed in the previous sections, the explicit description of the resulting polynomials was considered to be complicated. For these polynomials as well, an explicit formula can be derived immediately from the Fa\`{a} di Bruno formula.

\begin{theorem}
	For a positive integer $k$, we have
	\[
		\prod_{m=1}^\infty \frac{1}{1+q^m} \cdot \sum_{n=1}^\infty s_k(n) q^n = \scB_k(g_1(q), g_2(q), \dots, g_k(q)).
	\]
\end{theorem}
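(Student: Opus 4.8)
The plan is to apply the logarithmic form of Fa\`{a} di Bruno's formula stated in the introduction, after turning the Euler operator $\zeta\frac{\dd}{\dd\zeta}$ into an ordinary derivative. Following the change of variables used in the proof of \cref{thm:V-general}, I would set $\zeta = e^x$, so that $\zeta\frac{\dd}{\dd\zeta} = \frac{\dd}{\dd x}$ and the limit $\zeta \to 1$ becomes $x \to 0$. Writing
\[
	h(x) \coloneqq \prod_{m=1}^\infty \bigl(1 + e^{x/m} q^m\bigr),
\]
all the objects in the statement are encoded by $h$: every coefficient of $q^n$ in $h(x)$ is a finite polynomial in the exponentials $e^{x/m}$, so differentiating in $x$ and specializing at $x=0$ may be carried out coefficient-wise.

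First I would translate the two generating-function identities of Bringmann--Kim--Kim into the variable $x$. By construction,
\[
	\lim_{x \to 0} \frac{\dd^k}{\dd x^k} h(x) = \sum_{n=1}^\infty s_k(n) q^n, \qquad \log h(x) = \sum_{m=1}^\infty \log\bigl(1 + e^{x/m} q^m\bigr),
\]
and hence $\lim_{x \to 0} \frac{\dd^j}{\dd x^j} \log h(x) = g_j(q)$ is precisely the stated description of $g_j$. I would also record the value $h(0) = \prod_{m=1}^\infty (1+q^m)$.

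Next I would apply the ``In particular'' clause of Fa\`{a} di Bruno's formula to $h$, which gives, for each fixed $x$ near $0$,
\[
	\frac{h^{(k)}(x)}{h(x)} = \scB_k\bigl((\log h)^{(1)}(x), \dots, (\log h)^{(k)}(x)\bigr).
\]
Letting $x \to 0$ and substituting the three facts above, the left-hand side becomes $\prod_{m=1}^\infty \frac{1}{1+q^m} \cdot \sum_{n=1}^\infty s_k(n) q^n$, while each argument on the right tends to $g_j(q)$; since $\scB_k$ is a polynomial in its arguments, the limit passes inside and produces $\scB_k(g_1(q), \dots, g_k(q))$, which is the desired identity.

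The only point demanding care --- and the step I expect to be the main, if minor, obstacle --- is justifying the passage to the limit $x \to 0$ through the $k$-fold derivative and through the division by $h$. This is clean in the ring of formal power series in $q$: the constant term of $h(0)$ is $1$, so $h(0)$ is a unit and the quotient $h^{(k)}/h$ is well-defined with a limit computed term-by-term in $q$, matching the coefficient-wise differentiations used to define $s_k(n)$ and $g_j(q)$. Once this bookkeeping is in place, the theorem follows immediately, exactly paralleling the derivation of \cref{thm:V-general}.
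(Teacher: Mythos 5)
Your proposal is correct and follows essentially the same route as the paper: the paper's proof likewise applies the logarithmic form of Fa\`{a} di Bruno's formula to the product $\prod_{m=1}^\infty (1+\zeta^{1/m}q^m)$ at $\zeta = 1$, converting the Euler operator $\zeta\frac{\dd}{\dd\zeta}$ into an ordinary derivative by an exponential substitution (the paper uses $\zeta = e^{2\pi i z}$ where you use $\zeta = e^{x}$, a cosmetic difference --- your choice even spares you the weighted homogeneity of $\scB_k$ needed to cancel the powers of $2\pi i$). Your extra remarks on formal-power-series bookkeeping are sound and only make explicit what the paper leaves implicit.
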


\begin{proof}
	Applying Fa\`{a} di Bruno's formula to
	\[
		\theta_q(z) = \prod_{m=1}^\infty (1+\zeta^{1/m} q^m)
	\]
	at $z = 0$ with $\zeta = e^{2\pi iz}$, and using the relation $\zeta \frac{\dd}{\dd \zeta} = \frac{1}{2\pi i} \frac{\dd}{\dd z}$, we obtain the desired identity.
\end{proof}

\bibliographystyle{amsalpha}
\bibliography{References} 

\end{document}